\documentclass[a4paper,11pt]{article} 
\usepackage[english]{babel}
\usepackage{anysize}
\marginsize{3.5cm}{3.5cm}{2.5cm}{2.5cm}

\title{A Note on Large Deviations for 2D Coulomb Gas \\with Weakly Confining Potential}

\author{
\ Adrien Hardy 
\footnote{Institut de Math\'ematiques de Toulouse, Universit\'e de Toulouse, 31062 Toulouse, France.} \hspace{2 dd}\footnote{Department of Mathematics, Katholieke Universiteit Leuven, Celestijnenlaan 200 B,
3001 Leuven, Belgium. Email address: adrien.hardy@wis.kuleuven.be} }

\usepackage{fancyhdr}
\usepackage{amsfonts}
\usepackage{amsmath} 
\usepackage{color} 
\usepackage{amssymb} 
\usepackage{amsthm}
\usepackage[colorlinks=true, linkcolor=blue, citecolor=blue]{hyperref}
\usepackage[fixlanguage]{babelbib}

\numberwithin{equation}{section}

\newtheorem{theorem}{Theorem}[section]
\newtheorem{lemma}[theorem]{Lemma}
\newtheorem{corollary}[theorem]{Corollary}
\newtheorem{proposition}[theorem]{Proposition}

\theoremstyle{definition}

\newtheorem{Remark}[theorem]{Remark}
\newenvironment{remark}{\begin{Remark}\rm}{\end{Remark}}
\newenvironment{example}{\begin{Example}\rm}{\end{Example}}

\newtheorem{Example}[theorem]{Example}

\newcommand{\eq}{\begin{equation}}
\newcommand{\qe}{\end{equation}}

\newcommand{\R}{\mathbb{R}}
\newcommand{\C}{\mathbb{C}}
\newcommand{\M}{\mathcal{M}}
\newcommand{\F}{\mathcal{F}}
\newcommand{\V}{\boldsymbol{\mathcal{V}}}
\newcommand{\B}{\mathcal{B}}
\newcommand{\K}{\mathcal{K}}
\newcommand{\U}{\mathcal{U}}

\newcommand{\p}{\mathbb{P}}
\newcommand{\s}{\mathcal{S}}
\newcommand{\X}{\mathcal{X}}

\newcommand{\bs}{\boldsymbol}

\begin{document}
\maketitle 

\begin{abstract}
We investigate a  Coulomb gas in a potential satisfying a weaker growth assumption than usual and establish a large deviation principle for its empirical measure. As a consequence the empirical measure is seen to converge towards a non-random limiting measure, characterized by a variational principle from logarithmic potential theory, which may not have compact support. The proof of the large deviation upper bound is based on a compactification procedure which may be of help for further large deviation principles.
\end{abstract}

\emph{Keywords} : Large deviations ; Coulomb gas ; Random matrices ; Weak confinement.

\section{Introduction and statement of the result}
Given an infinite  closed subset $\Delta$ of $\C$, consider the distribution of $N$ particles $x_1,\ldots,x_N$ living on $\Delta$ which interact like a Coulomb gas at  inverse temperature $\beta>0$ under an external potential. Namely, let $\p_N$ be the probability distribution on $\Delta^N$ with density 
\eq
\label{density}
\frac{1}{Z_N}\prod_{1\leq i<j\leq N}|x_i-x_j|^\beta\prod_{i=1}^Ne^{-NV(x_i)},
\qe
where the so-called potential $V:\Delta\rightarrow\R$ is a continuous function which, provided $\Delta$ is unbounded, grows   sufficiently fast as $|x|\rightarrow\infty$ so that 
\eq
\label{ZN}
Z_N=\int\cdots\int_{\Delta^N} \prod_{1\leq i<j\leq N}|x_i-x_j|^\beta\prod_{i=1}^Ne^{-NV(x_i)}dx_i<+\infty.
\qe
For $\Delta=\R$ and $\beta=1$ (resp. $\beta=2$ and $4$) such a density is known to match with the joint eigenvalue distribution of a $N\times N$ orthogonal (resp. unitary and unitary symplectic) invariant Hermitian random matrix \cite{M}.  A similar observation can be made when $\Delta=\C$ (resp. the unit circle $\mathbb T$,  the real half-line $\R_+$, the segment $[0,1]$) by considering normal matrix models \cite{CZ} (resp. the $\beta$-circular ensemble, the $\beta$-Laguerre ensemble, the $\beta$-Jacobi ensemble, see \cite{Fo} for an overview).

In this work, our interest lies in the limiting  global  distribution of the $x_i$'s as $N\rightarrow\infty$, that is the convergence of the empirical measure
\eq
\mu^N=\frac{1}{N}\sum_{i=1}^N\delta_{x_i}
\qe
in the case where $\Delta$ is unbounded and $V$ satisfies a weaker growth assumption than usually presented in the literature, see \eqref{weakgrowth}. Note the $\mu^N$'s  are random variables taking their values in the space $\M_1(\Delta)$ of probability measures on $\Delta$, that we equip with the usual weak topology. 

When $\Delta=\R$, the almost sure convergence of $(\mu^N)_N$ towards a non-random  limit $\mu^*_V$ is classically known to hold under the hypothesis that there exists  $\beta'>1$ satisfying $\beta'\geq \beta$ such that 
\eq
\label{growth}
\liminf_{|x|\rightarrow\infty}\frac{V(x)}{\beta'\log|x|}>1,
\qe
that is, as $|x|\rightarrow\infty$, the confinement effect due to the potential  $V$ is stronger than the repulsion between the $x_i$'s. The limiting distribution $\mu^*_V$ is then characterized as the unique minimizer of the functional
\eq
\label{weightedlogenergy}
I_V(\mu)=\iint F_V(x,y)d\mu(x)d\mu(y), \qquad \mu\in\M_1(\Delta),
\qe
where we introduced the following variation of the weighted logarithmic kernel
\eq
F_V(x,y)=\frac{\beta}{2}\log\frac{1}{|x-y|}+\frac{1}{2}V(x)+\frac{1}{2}V(y),Ê\qquad x,y\in\Delta.
\qe
A stronger statement, first established by Ben Arous and Guionnet for a Gaussian potential $V(x)=x^2/2$ \cite{BAG} and later extended to arbitrary continuous potential $V$ satisfying the growth condition \eqref{growth} \cite[Theorem 2.6.1]{AGZ} (see also \cite[Theorem 5.4.3]{HP} for a similar statement with a slightly stronger growth assumption on $V$),  is that $(\mu^N)_N$ satisfies a large deviation principle (LDP) on $\M_1(\Delta)$ in the scale $N^2$ and  good rate function $I_V-I_V(\mu^*_V)$. It is moreover known that $\mu^*_V$ has a compact support \cite[Lemma 2.6.2]{AGZ}. A similar result is known to hold when $\Delta=\C$, see e.g. \cite[Theorem 5.4.9]{HP}.

It is the aim of this work to show that such statements still hold, except that $\mu^*_V$ may not have compact support,  when one allows the confining effect of the potential $V$ to be of the same order of magnitude than the repulsion between the $x_i$'s. Namely, we consider the following weaker  growth condition: there exists  $\beta'>1$ satisfying $\beta'\geq \beta$ such that 
\eq
\label{weakgrowth}
\liminf_{|x|\rightarrow\infty}\Big\{V(x)-\beta'\log|x|\Big\}>-\infty.
\qe
We provide a statement when $\Delta=\R$ or $\C$, and discuss later the case of more general $\Delta$'s. More precisely, we will establish  the following.

\begin{theorem} 
\label{th2} Let $\Delta=\R$ or $\C$. Under the growth assumption \eqref{weakgrowth},  
\begin{itemize}
\item[{\rm (a)}] The level set $\big\{\mu\in\M_1(\Delta):\,I_V(\mu)\leq \alpha\big\}$ is compact for any $\alpha\in\R$.
\item[{\rm (b)}] $I_V$ admits a unique minimizer $\mu_V^*$ on $\M_1(\Delta)$.
\item[{\rm (c)}] For any closed set $\F\subset \M_1(\Delta)$,
\[
\limsup_{N\rightarrow\infty}\frac{1}{N^2}\log \p_N\Big(\mu^N\in\F\Big)\leq - \inf_{\mu\in\F}\Big\{I_V(\mu)-I_V(\mu^*_V)\Big\}.
\]
\item[{\rm (d)}] For any open set $\mathcal O\subset \M_1(\Delta)$,
\[
\liminf_{N\rightarrow\infty}\frac{1}{N^2}\log \p_N\Big(\mu^N\in\mathcal O\Big)\geq - \inf_{\mu\in\mathcal O}\Big\{I_V(\mu)-I_V(\mu^*_V)\Big\}.
\]
\end{itemize}
\end{theorem}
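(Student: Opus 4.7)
My plan is to handle the four parts (a)--(d) in sequence, with the central difficulty---arising from the weak confinement---being control of possible mass escape to infinity. The starting point is a chordal rewriting of the kernel,
\[
F_V(x,y) = -\frac{\beta}{2}\log d(x,y) + \frac{1}{2}\widetilde V(x) + \frac{1}{2}\widetilde V(y),
\]
where $d(x,y) = |x-y|/\sqrt{(1+|x|^2)(1+|y|^2)}$ is the chordal (Riemann-sphere) distance and $\widetilde V(x) = V(x) - \frac{\beta}{2}\log(1+|x|^2)$. Since $d\leq 1$, the interaction piece $-\frac{\beta}{2}\log d(x,y)$ is pointwise non-negative, while under \eqref{weakgrowth} the modified potential $\widetilde V$ is bounded below on $\Delta$ and tends to $+\infty$ at infinity whenever $\beta' > \beta$. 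This decomposition is the engine that will replace the usual strong-growth argument.

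For (a) and (b), I would deduce lower semicontinuity of $I_V$ from Fatou applied to the truncation $\min(-\log d, M)$ as $M \to \infty$. Compactness of level sets amounts to tightness, which I would extract from the bound $I_V(\mu) \geq \int \widetilde V \, d\mu + C$ via Markov's inequality when $\beta' > \beta$; in the critical case $\beta' = \beta$, I would argue by contradiction, splitting a sequence $(\mu_n)$ with escaping mass into a near-infinity part and a bounded part and leveraging the non-negativity of the chordal interaction together with the strict inequality $\beta' > 1$ to force $I_V(\mu_n) \to +\infty$. Uniqueness of the minimizer would then follow from the strict convexity of $I_V$, itself a consequence of the (classical) strict positive-definiteness of the logarithmic energy on mean-zero signed measures.

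For (c), the upper bound, I would compactify. Viewing $\mu^N$ as a random element of the compact space $\M_1(\widehat \Delta)$, where $\widehat \Delta = \Delta \cup \{\infty\}$, extend $I_V$ to a lower semicontinuous functional $\widehat I_V$ on $\M_1(\widehat \Delta)$ by setting $\widetilde V(\infty) = \liminf_{|x|\to\infty} \widetilde V(x) \in (-\infty, +\infty]$ and keeping the chordal formula. The standard Laplace-type upper bound then applies: writing $\p_N(\mu^N \in \F) = Z_N^{-1} \int_{\{\mu^N \in \F\}} \exp(-N^2 J_N(\mu^N)) \prod_i dx_i$ with $J_N$ the off-diagonal version of the weighted energy, bound $J_N$ from below by a continuous function on $\widehat \Delta \times \widehat \Delta$ (by truncating the log-singularity), and pair this with a matching lower bound $Z_N \geq \exp(-N^2 I_V(\mu_V^*) + o(N^2))$ obtained by localizing the particles near a smooth approximation of $\mu_V^*$. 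Part (a) then guarantees that any limiting measure charging $\infty$ is penalized, so the compactified upper bound descends to $\M_1(\Delta)$.

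For (d), the lower bound is essentially standard once the rate function is identified. Given $\mu \in \mathcal O$ with $I_V(\mu) < \infty$, I would approximate $\mu$ weakly by compactly supported measures $\mu_R$ with $I_V(\mu_R) \to I_V(\mu)$; the convergence of $\int V d\mu_R$ is justified because finiteness of $I_V(\mu)$ forces the positive part of $V$ to be $\mu$-integrable. For the compactly supported $\mu_R$, the usual partition-and-Vandermonde argument---covering the support by cells of comparable mass and estimating the probability that one particle lies in each cell---yields the claimed lower bound. The principal obstacle of the whole proof is the compactification in (c) together with the critical tightness in (a)--(b); both rely essentially on the non-negativity of the chordal interaction and the strict inequality $\beta' > 1$.
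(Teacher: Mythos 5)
Your overall route is the paper's route: the chordal rewriting of $F_V$ is exactly the passage to the Riemann sphere via inverse stereographic projection used in Section \ref{compactification} (compare \eqref{metric}, \eqref{curlyV} and the identity \eqref{relation}), and your treatment of (a), (b), (d) and the Laplace/covering upper bound behind (c) parallels Section \ref{TstarmuNsection}--\ref{proofth2}. The genuine gap sits at the one point the weak growth assumption makes delicate: the justification that escaping mass is penalized. In the critical case $\beta'=\beta$ you claim that ``the non-negativity of the chordal interaction together with the strict inequality $\beta'>1$'' forces $I_V(\mu_n)\to+\infty$ along a sequence with escaping mass. Neither ingredient does this: non-negativity of $-\tfrac{\beta}{2}\log d$ only gives $I_V(\mu_n)\geq\int\widetilde V\,d\mu_n\geq\inf\widetilde V$, which stays bounded when $\beta'=\beta$ (indeed $\widetilde V\equiv 0$ for the Cauchy and spherical ensembles), and $\beta'>1$ is used for normalizability ($Z_N<+\infty$ and the single-particle reference integral), not for tightness. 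What you actually need is that the chordal kernel \emph{diverges} when both arguments go to infinity: if $\mu_n\big(\{|x|\geq R\}\big)\geq c>0$, then on that block $d(x,y)\leq 2/\sqrt{1+R^2}$, so this block contributes at least $c^2\bigl(\tfrac{\beta}{2}\log\tfrac{\sqrt{1+R^2}}{2}+\inf\widetilde V\bigr)$ while the complementary blocks are bounded below by $\inf\widetilde V$; equivalently, in the compactified picture, the extended functional is $+\infty$ on any measure charging $\infty$, because the atom at $\infty$ has infinite self-energy and $\widetilde V$ is bounded below by \eqref{weakgrowth}. This is precisely the mechanism the paper exploits, and it only requires $\beta'\geq\beta$.

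The same gap reappears in your part (c), where you assert that ``part (a) guarantees that any limiting measure charging $\infty$ is penalized''. Part (a) is a statement about $\M_1(\Delta)$ and cannot by itself control measures on the compactification with an atom at $\infty$; what lets the compactified upper bound descend is, first, $\widehat I_V(\nu)=+\infty$ whenever $\nu(\{\infty\})>0$ (as above), and second, the fact that a weak limit of push-forwards of measures of the closed set $\F$ which puts no mass at $\infty$ is itself the push-forward of a measure of $\F$, i.e.\ the continuity of the inverse push-forward on $\{\nu:\nu(\{\infty\})=0\}$ (Lemma \ref{homeo} and the argument leading to \eqref{C3}). Without this second point you cannot identify the infimum over the closure of the compactified event with $\inf_{\F}I_V$. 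Both repairs are available inside your own chordal framework, after which your argument coincides with the paper's; as minor remarks, the lower bound on $Z_N$ you propose to obtain by localization is what the paper derives by combining \eqref{UB} and \eqref{LB} with $\F=\mathcal O=\M_1(\Delta)$, and your part (d) is the paper's reduction to compactly supported measures followed by the classical lower-bound argument.
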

Note that \eqref{weakgrowth}, together with the inequality $|x-y|\leq (1+|x|)(1+|y|)$, $x,y\in\C$, yields \eqref{ZN} and that $F_V$ is bounded from below, so that $I_V$ is well defined on $\M_1(\Delta)$. 

A consequence of Theorem \ref{th2} (b) and (c), together with  the Borel-Cantelli Lemma, is the almost sure convergence of $(\mu^N)_N$ towards $\mu^*_V$ in the weak topology of $\M_1(\Delta)$. Namely, if  $\p$ stands for the probability measure induced by the  product probability space $\bigotimes_{N}\big(\Delta^N,\p_N\big)$, we have

\begin{corollary}
\label{convergence}
\[
\p\Big(\mu^N \mbox{converges weakly as } N\rightarrow\infty \mbox{ to $\mu^*_V$}\Big)= 1.
\]
\end{corollary}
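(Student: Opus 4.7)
The plan is to combine the large deviation upper bound from Theorem \ref{th2} (c) with a standard Borel--Cantelli argument. Since $\M_1(\Delta)$ equipped with the weak topology is metrizable (as $\Delta = \R$ or $\C$ is Polish), I would fix a compatible metric $d$ (for instance the Lévy--Prokhorov or bounded Lipschitz metric), so that $\mu^N \to \mu_V^*$ weakly is equivalent to $d(\mu^N, \mu_V^*) \to 0$.

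First, for each $\epsilon > 0$ I would introduce the closed set
\[
\F_\epsilon = \big\{ \mu \in \M_1(\Delta) : d(\mu, \mu^*_V) \geq \epsilon \big\},
\]
and argue that
\[
c_\epsilon := \inf_{\mu \in \F_\epsilon} \big\{ I_V(\mu) - I_V(\mu^*_V) \big\} > 0.
\]
This is the main technical point. If $c_\epsilon$ were zero, a minimizing sequence $(\mu_n) \subset \F_\epsilon$ would satisfy $I_V(\mu_n) \to I_V(\mu_V^*)$, hence eventually lie in some level set $\{I_V \leq \alpha\}$, which is compact by Theorem \ref{th2} (a). Extracting a convergent subsequence and using the lower semi-continuity of $I_V$ (which follows from compactness of level sets) and the uniqueness of the minimizer (Theorem \ref{th2} (b)), the limit would be $\mu_V^*$; but $\F_\epsilon$ is closed and does not contain $\mu_V^*$, a contradiction.

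With $c_\epsilon > 0$ in hand, the upper bound Theorem \ref{th2} (c) applied to $\F_\epsilon$ gives, for all $N$ sufficiently large,
\[
\p_N\big( \mu^N \in \F_\epsilon \big) \leq \exp\big( - N^2 c_\epsilon / 2 \big),
\]
so in particular $\sum_N \p_N(\mu^N \in \F_\epsilon) < \infty$. The Borel--Cantelli Lemma then yields that $\p$-almost surely $d(\mu^N, \mu_V^*) < \epsilon$ for all $N$ large enough. Intersecting these full-measure events over $\epsilon = 1/k$, $k \in \N$, produces a single event of probability one on which $d(\mu^N, \mu_V^*) \to 0$, which is exactly the desired weak convergence. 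The only genuine subtlety is the strict positivity of $c_\epsilon$; everything else is routine once the LDP upper bound and the good rate function property are granted.
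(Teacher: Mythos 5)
Your proof is correct and follows essentially the route the paper intends: the paper derives Corollary \ref{convergence} from Theorem \ref{th2} (b), (c) and the Borel--Cantelli Lemma, which is exactly your argument, with the strict positivity of $c_\epsilon$ established via the compact level sets of Theorem \ref{th2} (a), lower semi-continuity, and uniqueness of the minimizer. Nothing further is needed.
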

Let us now discuss few  examples arising from random matrix theory where the limiting distribution $\mu^*_V$ has unbounded support.
\begin{example}\textbf{(Cauchy ensemble)}
\label{noncompact}
On the space $\mathcal{H}_N(\C)$ of $N\times N$ Hermitian complex matrices, consider the probability distribution
\[
\frac{1}{Z_N}\det(I_N+X^2)^{-N}dX,
\]
where $I_N\in\mathcal{H}_N(\C)$ is the identity matrix, $dX$ the Lebesgue measure of $\mathcal{H}_N(\C)\simeq\R^{N^2}$ and $Z_N$ a normalization constant. Such a matrix model is a variation of the Cauchy ensemble \cite[Section 2.5]{Fo}. Performing a spectral decomposition and integrating out the eigenvectors, it is known that the induced  distribution for the eigenvalues is  given by \eqref{density} with $\Delta=\R$, $\beta=2$, $V(x)=\log(1+x^2)$, and some new normalization constant $Z_N$. One can then compute, see Remark \ref{examplecontinued} below, that the minimizer of \eqref{weightedlogenergy}  is the Cauchy distribution
\eq
\label{cauchylaw}
d\mu^*_V(x)=\frac{1}{\pi(1+x^2)}dx,
\qe
where $dx$ is the Lebesgue measure on $\R$.
\end{example}

\begin{example}\textbf{(Spherical ensemble)}
\label{noncompact2}
Given $A$ and $B$ two independent $N\times N$ matrices  with i.i.d. standard complex Gaussian entries, it is known that  the $N$ zeros of the random polynomial $\det(A-zB)$ (i.e. the eigenvalues of $AB^{-1}$ when $B$ is invertible) are distributed according to  \eqref{density} with $\Delta=\C$, $\beta=2$, $V(x)=\log(1+|x|^2)$ (up to a negligible correction), see \cite[Section 3]{Kr}. One may also consider the probability distribution on the space $\mathcal{N}_N(\C)$ of $N\times N$ normal complex matrices given by
\[
\frac{1}{Z_N}\det(I_N+X^*X)^{-N}dX,
\]
where $I_N\in\mathcal{N}_N(\C)$ is the identity matrix, $dX$ the Riemannian volume form on $\mathcal{N}_N(\C)$ induced by the Lebesgue measure of the space of $N\times N$ complex matrices ($\simeq \C^{N^2}$),  $Z_N$ a normalization constant, and obtains the same Coulomb gas for the eigenvalue distribution \cite[Section 2]{CZ}. The minimizer of \eqref{weightedlogenergy}  is then the distribution 
\eq
\label{sphericallaw}
d\mu^*_V(x)=\frac{1}{\pi(1+|x|^2)^2}dx,
\qe
where $dx$ stands for the Lebesgue measure on $\C\simeq\R^2$, see Remark \ref{examplecontinued}.
\end{example}

\begin{remark}\textbf{(Exponential tightness and compactification)} \\
The proofs of the large deviation principles under the stronger growth assumption \eqref{growth}  presented in \cite{BAG}, \cite{HP}, \cite{AGZ} follow a classical strategy in large deviation principles theory (see e.g \cite{DZ} for an introduction), that is to  control   the deviations of $(\mu^N)_N$ towards arbitrary small balls of $\M_1(\Delta)$, and  then prove  an exponential tightness property for $(\mu^N)_N$ : there exists a sequence of compact sets $(\K_L)_L\subset \M_1(\Delta)$ such that 
\eq
\label{exptight}
\limsup_{L\rightarrow\infty}\limsup_{N\rightarrow\infty}\frac{1}{N^2}\log \p_N\Big(\mu^N\notin\K_L\Big)= -\infty.
\qe
The exponential tightness is actually used to establish the large deviation upper bound, and plays no role in the proof of the lower one. Under the weaker growth assumption \eqref{weakgrowth}, it is not clear to the author how to prove the exponential tightness for $(\mu^N)_N$ directly, and we thus prove Theorem \ref{th2} by using a different approach. We  adapt an idea of \cite{HK} and  map $\C$ onto the Riemann sphere $\s$, homeomorphic to the one-point compactification of $\C$ by the inverse stereographic projection $T$, then  push-forward $\M_1(\C)$ to $\M_1(\s)$, and take advantage that the latter set is compact for its weak topology. More precisely, it will be seen that it is enough to establish upper bounds for the deviations of $(T_*\mu^N)_N$, the push-forward of $(\mu^N)_N$ by $T$, towards arbitrary small balls of $\M_1(\s)$. The latter fact is possible thanks to the explicit change of metric induced by $T$.  
\end{remark}

Our approach is still available for a large class of supports $\Delta$ and for potentials $V$ satisfying  weaker regularity assumptions, justifying our choice to consider general $\Delta$'s.  Nevertheless, it is not the purpose of this note to establish in such a general setting the large deviation lower bound, which is a local property and in fact will be seen to be independent of the growth assumption for $V$. This is the reason why we restricted $\Delta$  to be $\R$ or $\C$ in Theorem \ref{th2}.

We first describe the announced compactification procedure in Section \ref{compactification}. Then, we study $(T_*\mu^N)_N$ and a related rate function in Section \ref{TstarmuNsection}.  From these informations, we are able to provide a proof for Theorem \ref{th2} in Section \ref{proofth2}. Finally, we discuss in Section \ref{generalizations}  some generalizations concerning the support of the Coulomb gas, the regularity of the potential and the compactification procedure of possible further interest.

\section{Proof of Theorem \ref{th2}}

We first describe the compactification procedure. In this subsection, $\Delta$ is an arbitrary unbounded closed subset of $\C$.

\subsection{Compactification}

\label{compactification}
We consider the Riemann sphere, here parametrized as the sphere of $\R^3$ centered in $(0,0,1/2)$ of radius $1/2$, 
\[
 \s = \Big\{ (x_1, x_2, x_3) \in \R^3 \mid x_1^2 + x_2^2 + (x_3- \tfrac{1}{2})^2 = \tfrac{1}{4} \Big\},
\]
and $T:\C\rightarrow \s$  the associated inverse stereographic projection, namely the map defined by 
\[
T(x)= \left(\frac{{\rm Re}(x)}{1+|x|^2},\frac{{\rm Im}(x)}{1+|x|^2},\frac{|x|^2}{1+|x|^2}\right),\qquad x\in\C.
\] 
It is known that $T$ an homeomorphism from $\C$ onto $\s\setminus\{\infty\}$, where $\infty=(0,0,1)$, so that $(\s,T)$ is a one-point compactification of $\C$. We write for convenience 
\eq
\label{deltaS}
\Delta_{\s}= {\rm clo}\big(T(\Delta)\big)=T(\Delta)\cup\{\infty\}
\qe
for the closure of $T(\Delta)$ in $\s$. For $\mu\in\M_1(\Delta)$, we  denote by $T_*\mu$  its push-forward by $T$, that is the measure on $\Delta_\s$ characterized by
\eq
\label{pushf}
\int_{\Delta_\s} f(z)dT_*\mu(z) = \int_\Delta f \big(T(x)\big)d\mu(x)
\qe
for every Borel function $f$ on $\Delta_\s$.   Then the following Lemma holds.

\begin{lemma} 
\label{homeo}
$T_*$ is an homeomorphism from $\M_1(\Delta)$ to 
\[
\big\{\mu\in\M_1(\Delta_\s):\,\mu(\{\infty\})=0\big\}.
\]
\end{lemma}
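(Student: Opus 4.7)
The plan is to verify the four properties of a homeomorphism: that $T_*$ is well-defined into the announced target set, is injective, is surjective, and is bicontinuous for the weak topologies. The first three are essentially formal consequences of $T$ being a homeomorphism from $\Delta$ onto $\Delta_\s\setminus\{\infty\}$. Indeed, for $\mu\in\M_1(\Delta)$, applying \eqref{pushf} with $f$ the indicator of $\{\infty\}$ yields $T_*\mu(\{\infty\})=0$, so $T_*$ lands in the right set; injectivity is automatic because $T$ is a bijection onto its image; and for surjectivity, given $\nu\in\M_1(\Delta_\s)$ with $\nu(\{\infty\})=0$, the measure $(T^{-1})_*\bigl(\nu\big|_{\Delta_\s\setminus\{\infty\}}\bigr)$ is a Borel probability measure on $\Delta$ whose push-forward by $T$ is $\nu$.

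The continuity of $T_*$ exploits the compactness of $\Delta_\s$, which gives $C_b(\Delta_\s)=C(\Delta_\s)$. For any $f\in C(\Delta_\s)$, the function $f\circ T$ lies in $C_b(\Delta)$, so if $\mu_n\to\mu$ weakly in $\M_1(\Delta)$, the change of variables \eqref{pushf} gives
\[
\int f\, dT_*\mu_n=\int f\circ T\, d\mu_n\longrightarrow \int f\circ T\, d\mu=\int f\, dT_*\mu,
\]
which is precisely weak convergence $T_*\mu_n\to T_*\mu$ in $\M_1(\Delta_\s)$.

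The main obstacle is the continuity of the inverse, and this is where the constraint $\mu(\{\infty\})=0$ on the target set plays its decisive role. Assume $\nu_n\to\nu$ weakly in the target set, endowed with the subspace topology from $\M_1(\Delta_\s)$. Given an arbitrary $g\in C_b(\Delta)$, one would like to write $\int g\, d(T^{-1})_*\nu_n=\int g\circ T^{-1}\, d\nu_n$; the difficulty is that $g\circ T^{-1}$ does not in general extend continuously to $\infty$. The workaround is to set $\widetilde g:=g\circ T^{-1}$ on $\Delta_\s\setminus\{\infty\}$ and $\widetilde g(\infty):=0$, producing a bounded Borel function on the compact metric space $\Delta_\s$ whose set of discontinuities is contained in $\{\infty\}$, hence has $\nu$-measure zero by assumption. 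The Portmanteau theorem, in its version for bounded Borel functions that are continuous $\nu$-almost everywhere, then yields $\int\widetilde g\, d\nu_n\to\int\widetilde g\, d\nu$, and a final change of variables concludes the weak convergence $(T^{-1})_*\nu_n\to(T^{-1})_*\nu$ in $\M_1(\Delta)$. Without the hypothesis $\nu(\{\infty\})=0$ this last step would fail, which explains why the target set must be restricted exactly as in the statement.
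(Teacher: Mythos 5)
Your proof is correct, and it settles the only delicate point --- continuity of ${T_*}^{-1}$ --- by a mechanism genuinely different from the paper's. The paper argues via tightness: since the limit $\nu$ does not charge $\{\infty\}$, outer regularity plus the portmanteau upper bound on a closed neighborhood $B$ of $\infty$ give $\limsup_N\nu_N(B)\leq\nu(B)\leq\epsilon$, so the pulled-back sequence is tight on $\Delta$; convergence against compactly supported continuous functions (vague convergence) combined with tightness then upgrades to weak convergence. You instead fix an arbitrary $g\in C_b(\Delta)$, extend $g\circ T^{-1}$ by $0$ at $\infty$, and invoke the extended Portmanteau/mapping theorem for bounded Borel functions whose discontinuity set is $\nu$-null --- the null set here being exactly $\{\infty\}$. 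Both routes hinge on the hypothesis $\nu(\{\infty\})=0$, and both are valid (sequential arguments suffice since $\M_1(\Delta_\s)$ and $\M_1(\Delta)$ are metrizable). Your version is shorter and treats each $C_b(\Delta)$ test function in one stroke, at the cost of citing a somewhat less elementary theorem; the paper's version is self-contained, using only outer regularity, the closed-set portmanteau inequality, and the standard ``tightness plus vague convergence implies weak convergence'' fact. You also spell out well-definedness, injectivity and surjectivity, which the paper dispatches implicitly by exhibiting the push-backward inverse; that extra care is harmless and correct (in particular, $T(\Delta)=\Delta_\s\setminus\{\infty\}$ because $\Delta$ is closed and unbounded, so restricting $\nu$ away from $\infty$ and pushing backward does land in $\M_1(\Delta)$).
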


\begin{proof}
$T^*$ is clearly continuous. The inverse of $T_*$ is given by push backward via $T$, that is, for any $\mu\in\M_1(\Delta_\s)$ satisfying $\mu(\{\infty\})=0$, ${T_*}^{-1}\mu(A)=\mu(T(A))$ for all Borel set $A\subset\Delta_\s$.  To show the continuity of ${T_*}^{-1}$, consider a sequence $(\mu_N)_N$ in $\M_1(\Delta_\s)$ with weak limit $\mu$ and assume that $\mu_N(\{\infty\})=0$ for all $N$ and  $\mu(\{\infty\})=0$. Then, for any $\epsilon>0$, the outer regularity of $\mu$ and the weak convergence of $(\mu_N)_N$ towards $\mu$ yield the existence of a  neighborhood  $B\subset \Delta_\s$ of $\infty$ such that 
\[
\limsup_{N\rightarrow \infty}\mu_N(B)\leq \mu(B)\leq \epsilon,
\]
which equivalently means that  $({T_*}^{-1}\mu_N)_N$ is tight. As a consequence, since $f\circ {T}^{-1}$ is continuous on $\Delta_\s$ for any continuous function $f$ having compact support in $\Delta$, the continuity of ${T_*}^{-1}$ follows.

\end{proof}

The next step is to obtain an upper control on the deviation of $(T_*\mu^N)_N$ towards arbitrary small balls of $\M_1(\Delta_\s)$.

\subsection{Weak LDP upper bound for $(T_*\mu^N)_N$}
\label{TstarmuNsection}

In this subsection, $\Delta$ is an arbitrary unbounded closed subset of $\C$, the potential $V:\Delta\rightarrow\R\cup\{+\infty\}$ is a lower semi-continuous map satisfying the growth condition \eqref{weakgrowth}, and we assume there exists $\mu\in\M_1(\Delta)$ such that $I_V(\mu)<+\infty$.

The change of metric induced by $T$ is given by (see e.g. \cite[Lemma 3.4.2]{Ash})
\eq
\label{metric}
|T(x)-T(y)|=\frac{|x-y|}{\sqrt{1+|x|^2}\sqrt{1+|y|^2}}, \qquad x,y\in\C,
\qe
where $|\cdot|$  stands for the Euclidean norm of $\R^3$ (we identify $\C$ with $\{(x_1,x_2,x_3)\in\R^3 : \; x_3=0\}$). Note that  by letting $y\rightarrow+\infty$ in \eqref{metric}, squaring and using the Pythagorean theorem, one obtains the useful relation
\eq
\label{metric2}
1-|T(x)|^2=\frac{1}{1+|x|^2},\qquad x\in\C.
\qe
From the potential $V$ we then construct a potential $\V : \Delta_\s\rightarrow\R\cup\{+\infty\}$ in the following way. Set
\eq
\label{curlyV}
\V \big(T(x)\big)=V(x)-\frac{\beta}{2}\log(1+|x|^2), \qquad x\in\Delta,
\qe
and  
\eq
\label{curlyV2}
\V(\infty)=\liminf_{|x|\rightarrow\infty,\,x\in\Delta}\Big\{V(x)-\frac{\beta}{2}\log(1+|x|^2)\Big\}.
\qe
Note that  the growth assumption \eqref{weakgrowth}  is equivalent to $\V(\infty)>-\infty$, so that $\V$ is lower semi-continuous on $\Delta_\s$. As a consequence the kernel 
\eq
\label{Fbs}
F_{\V}(z,w)=\frac{\beta}{2}\log\frac{1}{|z-w|}+\frac{1}{2}\V(z)+\frac{1}{2}\V(w), \qquad z,w\in\Delta_\s,
\qe  
 is lower semi-continuous  and bounded from below on $\Delta_\s\times\Delta_\s$, and the functional
\eq
\label{RFs}
I_{\V}(\mu) = \iint F_{\V}(z,w)d\mu(z)d\mu(w), \qquad \mu\in\M_1(\Delta_\s),
\qe
is well-defined. One understands from \eqref{metric}, \eqref{curlyV} and  \eqref{pushf} that the potential $\V$ has been built so that the following relation  holds
\eq
\label{relation}
I_V(\mu)=I_{\V}\big(T_*\mu\big), \qquad \mu\in\M_1(\Delta).
\qe
Let us come back to Examples \ref{noncompact} and \ref{noncompact2}.

\begin{remark}\textbf{(Examples \ref{noncompact}, \ref{noncompact2}, continued)}
\label{examplecontinued}
For $\Delta=\R$ or $\C$, $\beta=2$ and $V(x)=\log(1+|x|^2)$, we have $\bs\V=0$ and thus from \eqref{relation}
\eq
I_V(\mu)=\iint\log\frac{1}{|z-w|}dT_*\mu(z)dT_*\mu(w),\qquad \mu\in\M_1(\Delta).
\qe
Note that if $\Delta=\R$ (resp. $\Delta=\C$) then $\Delta_\s=\s\cap\{(x_1,x_2,x_3)\in\R^3: \; x_2=0\}$ is a circle (resp.  $\Delta_\C=\s$ the full sphere).  By rotational invariance, the minimizer of 
\[
\iint \log\frac{1}{|z-w|}d\nu(z)d\nu(w),\qquad \nu\in \M_1( \Delta_\s)
\]
has to be the uniform measure ${\U}_{\Delta_\s}$ of  $\Delta_\s$, and  thus the minimizer $\mu_V^*$ of $I_V$ is given by the push-backward ${T_*}^{-1}\U_{\Delta_\s}$.  Thus, if $\Delta=\R$ (resp. $\Delta=\C$), an easy Jacobian computation involving polar (resp. spherical) coordinates yields that $\mu^*_V$ equals \eqref{cauchylaw} (resp. \eqref{sphericallaw}).
\end{remark}

Given a metric $d$ on $\M_1(\Delta_\s)$, compatible with its weak topology (such as the L\'evy-Prohorov metric, see \cite{D}), we denote for the associated balls
\[
\B(\mu,\delta)=\Big\{\nu\in\M_1(\Delta_\s) : \; d(\mu,\nu)<\delta\Big\}, \qquad \mu\in\M_1(\Delta_\s), \qquad \delta>0.
\]
The following Proposition gathers all the informations concerning $I_{\bs\V}$ and $(T_*\mu^N)_N$  needed to establish Theorem \ref{th2} in the next Section.

\begin{proposition} \
\label{TstarmuN}
\begin{enumerate}
\item[{\rm (a)}]
The level set $\big\{\mu\in\M_1(\Delta_\s):\,I_{\bs\V}(\mu)\leq \alpha\big\}$ is closed, and thus compact, for any $\alpha\in\R$.
\item[{\rm (b)}]
$I_{\bs\V}$ is strictly convex on the set where it is finite.
\item[{\rm (c)}]
For any $\mu\in\M_1(\Delta_\s)$, we have
\[
\limsup_{\delta\rightarrow0}\limsup_{N\rightarrow\infty}\frac{1}{N^2}\log\Big\{ Z_N\p_N\Big(T_*\mu^N\in\B(\mu,\delta)\Big)\Big\}\leq - I_{\bs\V}(\mu).
\]
\end{enumerate}
\end{proposition}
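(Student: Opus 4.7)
For (a), I would use that $\Delta_\s$ is compact, hence $\M_1(\Delta_\s)$ is compact in the weak topology. Writing $F_{\bs\V}=\sup_M F_M$ with $F_M=\min(F_{\bs\V},M)$ lsc and bounded, the standard fact that $\nu\mapsto\iint F_M\,d\nu\,d\nu$ is lsc on $\M_1(\Delta_\s)$ for lsc bounded integrands, combined with monotone convergence in $M$, gives that $I_{\bs\V}$ is lsc. Level sets are then closed inside the compact $\M_1(\Delta_\s)$, hence compact. For (b), I would decompose $I_{\bs\V}(\mu) = -\tfrac{\beta}{2}\iint\log|z-w|\,d\mu\,d\mu + \int\bs\V\,d\mu$: the linear part does not affect convexity, and for the quadratic form I would use the Riesz-type representation $-\log|z-w|=c+\int_0^\infty(e^{-s|z-w|^2}-e^{-s})ds/(2s)$ together with the strict positive definiteness of the Gaussian kernel on $\R^3$ to deduce that $\iint\log\tfrac{1}{|z-w|}\,d\nu\,d\nu>0$ for every nonzero signed measure $\nu$ on $\Delta_\s$ of zero total mass and finite energy, giving strict convexity on the finite-energy set.

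The substantive step is (c). The key algebraic identity, obtained by combining $|x-y|=\sqrt{1+|x|^2}\sqrt{1+|y|^2}\,|T(x)-T(y)|$ with $V=\bs\V\circ T+\tfrac{\beta}{2}\log(1+|x|^2)$, is that
\[
\prod_{i<j}|x_i-x_j|^\beta \prod_i e^{-NV(x_i)} = \exp\Big\{-\sum_{i\neq j}F_{\bs\V}(T(x_i),T(x_j))\Big\}\prod_i e^{-V(x_i)},
\]
all the $(1+|x|^2)^{\pm\beta/2}$ powers cancelling exactly. I would then truncate via $F_M=\min(F_{\bs\V},M)$, which is lsc and bounded on $\Delta_\s\times\Delta_\s$, so that $\nu\mapsto\iint F_M\,d\nu\,d\nu$ is lsc. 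On $\B(\mu,\delta)$ with $\delta$ small enough, this integral is at least $\iint F_M\,d\mu\,d\mu-\epsilon$; combined with the off-diagonal correction $\iint_{z\neq w}F_M\,d\mu^N_T\,d\mu^N_T \geq \iint F_M\,d\mu^N_T\,d\mu^N_T - M/N$, this yields on the event $T_*\mu^N\in\B(\mu,\delta)$
\[
\sum_{i\neq j}F_{\bs\V}(T(x_i),T(x_j)) \geq N^2\iint F_M\,d\mu\,d\mu - N^2\epsilon - NM .
\]
Plugging this in, taking $\log$ and dividing by $N^2$, and passing to the limits $N\to\infty$, $\delta\to 0$, $\epsilon\to 0$, and finally $M\to\infty$ via monotone convergence, gives the bound $-I_{\bs\V}(\mu)$, up to the leftover term $\tfrac{1}{N^2}\log\int_{\Delta^N}\prod_i e^{-V(x_i)}\,dx_i = \tfrac{1}{N}\log\int_\Delta e^{-V(x)}\,dx$.

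The main obstacle is controlling this leftover. Under \eqref{weakgrowth}, $e^{-V(x)}$ decays like $|x|^{-\beta'}$, so $\int_\Delta e^{-V}\,dx<\infty$ is immediate for $\Delta=\R$ (where $\beta'>1$ suffices) but may fail for $\Delta=\C$ (which would need $\beta'>2$). I would handle this by replacing $\bs\V$ throughout by a small $N$-dependent perturbation, for instance $\bs\V_N(z):=\bs\V(z)-\tfrac{c}{N}\log(1-|z|^2)$ for a fixed $c>0$ chosen so that the modified leftover $\int e^{-V(x)}(1+|x|^2)^{-c}\,dx$ is finite; the associated kernel $F_{\bs\V_N}$ differs from $F_{\bs\V}$ by $O(1/N)$ pointwise, so the lower-semicontinuity argument goes through with $F_M$ replaced by an analogous truncation, and the $M\to\infty$ monotone limit still recovers $I_{\bs\V}(\mu)$. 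Tracking this perturbation cleanly through the truncation and the successive limits is the delicate point; a Hölder splitting that separates a small integrability slice from the main exponential bound is an alternative route to the same conclusion.
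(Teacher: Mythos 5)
Your parts (a) and the core of (c) follow the paper's own proof almost verbatim: the same exact identity transferring the density to the sphere (your displayed formula is the paper's \eqref{A2}, with the per-particle weight $(1-|z_i|^2)^{\beta/2}e^{-\bs\V(z_i)}d\lambda(z_i)$ rewritten in $x$-coordinates as $e^{-V(x_i)}dx_i$), the same truncation of the kernel, the same $O(1/N)$ diagonal correction using that the diagonal carries mass $1/N$, and the same order of limits $N\to\infty$, $\delta\to0$, $M\to\infty$ with monotone convergence; using $\min(F_{\bs\V},M)$ and lower semicontinuity of $\nu\mapsto\iint F_M\,d\nu\,d\nu$ instead of the paper's increasing sequence of continuous minorants is an immaterial variant. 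Part (b) is a genuinely different route: the paper simply quotes the fact that $I(\mu-\nu)\geq 0$ with equality iff $\mu=\nu$ from Cegrell--Kolodziej--Levenberg, whereas you reprove it through the Frullani/Gaussian representation of $-\log|z-w|$ and strict positive definiteness of the Gaussian kernel; this is the Ben Arous--Guionnet/AGZ argument, it is correct modulo the usual care that the cross-energy is finite so that $I(\mu-\nu)$ and the Fubini step make sense, and it buys self-containedness at the cost of a standard computation.

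The point where you depart from the paper is the leftover one-particle factor, and there your proposal has a genuine flaw. The paper does not perturb anything: it observes that the leftover weight integrates to $\int_{\Delta_\s}(1-|z|^2)^{\beta/2}e^{-\bs\V(z)}d\lambda(z)=\int_\Delta e^{-V(x)}dx$, asserts its finiteness as a consequence of \eqref{weakgrowth} (this is exactly where the growth assumption enters, just as it is invoked for $Z_N<\infty$), and concludes since this term contributes only at order $N$ in the exponent. Your worry that $\int_\C e^{-V}dx$ may diverge when $\beta'\leq 2$ is a fair reading of \eqref{weakgrowth} as literally stated (in the truly borderline case $\beta=\beta'=2$, $V=\log(1+|x|^2)$ on $\C$, even $Z_N$ diverges), but your repair does not work as written. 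First, the bookkeeping is off: the density is fixed, so all you may do is re-split it between a pair kernel and a one-particle weight; re-splitting with $\bs\V_N=\bs\V-\tfrac{c}{N}\log(1-|z|^2)$ produces the leftover weight $e^{-V(x)}(1+|x|^2)^{+c(N-1)/N}$, i.e. it makes matters worse, while the splitting that yields your claimed leftover $\int e^{-V(x)}(1+|x|^2)^{-c}dx$ uses $\bs\V+\tfrac{c}{N-1}\log(1-|z|^2)$. Second, and more seriously, $\log(1-|z|^2)\to-\infty$ as $z\to\infty$ in $\Delta_\s$, so the perturbation is not $O(1/N)$ uniformly; with the sign that actually helps the leftover, the modified potential equals $-\infty$ at $\infty$, so the modified kernel is neither bounded below nor lower semicontinuous with values in $\R\cup\{+\infty\}$, which is precisely the structure the compactification, the truncation, and the control of the infimum over $\B(\mu,\delta)$ (a ball containing measures charging $\infty$) rely on, and the error cannot be absorbed uniformly in $N$. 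So either follow the paper and use $\int_\Delta e^{-V}dx<+\infty$ as granted by the growth assumption, or, if you want to weaken that, the perturbation must be set up so that the modified kernel stays bounded below and differs from $F_{\bs\V}$ by a uniformly $O(1/N)$ amount --- your $\bs\V_N$ does neither.
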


The proof of Proposition \ref{TstarmuN} is  somehow classical and  inspired from the ideas developed in \cite{BAG} (c.f.  also \cite{HP}, \cite{AGZ}, \cite{HK}). 
\begin{proof} 
(a) It is equivalent to show that $I_{\bs\V}$ is  lower semi-continuous. Since $F_{\V}$ is lower semi-continuous, there exists an increasing sequence $(F_{\V}^M)_M$ of continuous functions on $\Delta_\s\times\Delta_\s$ satisfying $F_{\V}=\sup_MF_{\V}^M$.  We obtain for any $\mu\in\M_1(\Delta_\s)$ by  monotone convergence
\[
I_{\V}(\mu)=\sup_M \iint F_{\V}^M(z,w)d\mu(z)d\mu(w),
\] 
and $I_{\V}$ is thus lower semi-continuous on $\M_1(\Delta_\s)$ being the supremum of a family of continuous functions.

(b) Denote for a (possibly signed) measure $\mu$ on $\s$ its logarithmic energy by
\eq
\label{logenergy}
I(\mu)=\iint \log\frac{1}{|x-y|}d\mu(x)d\mu(y)
\qe
when this integral makes sense, and note that if $\mu\in\M_1(\Delta_\s)$ then $I(\mu)\geq 0$.  Since  $\V$ is bounded from below and $\mu\mapsto\int\V(z)d\mu(z)$  is linear, it is enough to show that $\mu\mapsto I(\mu)$ is strictly convex on the set where it is finite. Given $\mu,\nu\in\M_1(\Delta_\s)$ having finite logarithmic energies, we have for any $0<t<1$
\[
I\big(t \mu +(1-t)\nu\big)= t I(\mu) + (1-t)I(\nu) - t(1-t)I(\mu-\nu).
\]
Moreover, since $I(\mu-\nu)\geq 0$ with equality if and only if $\mu=\nu$ \cite[Theorem 2.5]{CKL}, the strict convexity of $I$ where it is finite follows.

(c) Introduce for $i=1,\ldots,N$ the random variables $z_i=T(x_i)$ where the $x_i$'s are distributed according to \eqref{density} so that 
\eq
\label{empizi}
T_*\mu^N = \frac{1}{N}\sum_{i=1}^N \delta_{z_i}.
\qe
We can easily compute the distribution for the $z_i$'s induced  by \eqref{density}. Indeed, with $\V$ defined in \eqref{curlyV}--\eqref{curlyV2}, we obtain from the metric relations \eqref{metric}--\eqref{metric2} that
\begin{align*}
& \;\frac{1}{Z_N}\prod_{1\leq i<j\leq N}|x_i-x_j|^\beta\prod_{i=1}^Ne^{-NV(x_i)}dx_i\\
= & \;\frac{1}{Z_N}\prod_{1\leq i<j\leq N}|T(x_i)-T(x_j)|^\beta\prod_{i=1}^N\big(1-|T(x_i)|^2\big)^{\beta/2}e^{-N\big(V(x_i)-\frac{\beta}{2}\log(1+|x_i|^2)\big)}dx_i \\
= &\; \frac{1}{Z_N}\prod_{1\leq i<j\leq N}|z_i-z_j|^\beta\prod_{i=1}^N(1-|z_i|^2)^{\beta/2} e^{-N\bs\V(z_i)}d\lambda(z_i),
\end{align*}
where $\lambda$ stands for the push-forward by $T$ of (the restriction of) the Lebesgue measure on $\Delta$. As a consequence, we have

\begin{align}
\label{A1}
& Z_N\p_N\Big(T_*\mu^N\in\B(\mu,\delta)\Big) \nonumber \\
= &  \int\ldots\int_{\big\{\bs z\in \Delta_\s^N:\,T_*\mu^N\in\B(\mu,\delta)\big\}}\prod_{1\leq i<j\leq N}|z_i-z_j|^\beta\prod_{i=1}^N(1-|z_i|^2)^{\beta/2}e^{-N\bs\V(z_i)}d\lambda(z_i).
\end{align}
Then, with $F_{\V}$ defined in \eqref{Fbs}, one can write 
\begin{align}
\label{A2}
& \prod_{1\leq i<j\leq N}|z_i-z_j|^\beta\prod_{i=1}^N(1-|z_i|^2)^{\beta/2}e^{-N\bs\V(z_i)}d\lambda(z_i)\nonumber \\
=& \exp\Big\{-\sum_{1\leq i \neq j \leq N}F_{\bs\V}(z_i,z_j)\Big\}\prod_{i=1}^N(1-|z_i|^2)^{\beta/2}e^{-\bs\V(z_i)}d\lambda(z_i)\nonumber\\
= & \exp\Big\{-N^2\iint_{z\neq w}F_{\bs\V}(z,w)dT_*\mu^N(z)dT_*\mu^N(w)\Big\}\prod_{i=1}^N(1-|z_i|^2)^{\beta/2}e^{-\bs\V(z_i)}d\lambda(z_i).
\end{align}
With  $F^M_{\bs\V}$ as in the proof of Proposition \ref{TstarmuN}  (a) above, we have
\eq
\label{A3}
\iint_{z\neq w}F_{\bs\V}(z,w)dT_*\mu^N(z)dT_*\mu^N(w) \geq \iint_{z\neq w}F^M_{\bs\V}(z,w)dT_*\mu^N(z)dT_*\mu^N(w).
\qe
Moreover, since $\p_N$-almost surely
\[
T_*\mu^N\otimes T_*\mu^N\big(\{(x,y)\in\Delta_\s\times\Delta_\s:\,x=y\}\big)=\frac{1}{N},
\] 
we obtain on the event $\{T_*\mu^N\in\B(\mu,\delta)\}$  that
\begin{align}
\label{A4}
 &  \iint_{z\neq w}F^M_{\bs\V}(z,w)dT_*\mu^N(z)dT_*\mu^N(w) \nonumber \\
 \geq & \; \iint F^M_{\bs\V}(z,w)dT_*\mu^N(z)dT_*\mu^N(w)-\frac{1}{N}\max_{\Delta_\s\times\Delta_\s}F^M_{\bs\V}\nonumber \\
\geq  & \; \inf_{\nu\in\B(\mu,\delta)}\iint F^M_{\bs\V}(z,w)d\nu(z)d\nu(w)-\frac{1}{N}\max_{\Delta_\s\times\Delta_\s}F^M_{\bs\V}.
\end{align}
From \eqref{A1}--\eqref{A4} we find
\begin{align}
\label{A5}
& \log \Big\{Z_N\p_N\Big(T_*\mu^N\in\B(\mu,\delta)\Big)\Big\}\nonumber\\
\leq & \;-N^2\inf_{\nu\in\B(\mu,\delta)}\iint F^M_{\bs\V}(z,w)d\nu(z)d\nu(w) \\
 & \quad+N \left(\max_{\Delta_\s\times\Delta_\s}F^M_{\bs\V}+ \log \int_{\Delta_\s}(1-|z|^2)^{\beta/2}e^{-\bs\V(z)}d\lambda(z)\right). \nonumber
\end{align}
Note that by performing the change of variables $z=T(x)$, using \eqref{metric2} and the growth assumption \eqref{weakgrowth}, it follows that
\[
\int_{\Delta_\s}(1-|z|^2)^{\beta/2}e^{-\bs\V(z)}d\lambda(z) = \int_{\Delta}e^{-V(x)}dx<+\infty,
\]
and thus  \eqref{A5} yields
\eq
\label{A6}
\limsup_{N\rightarrow\infty}\frac{1}{N^2}\log \Big\{Z_N\p_N\Big(T_*\mu^N\in\B(\mu,\delta)\Big)\Big\}\leq -\inf_{\nu\in\B(\mu,\delta)}\iint F^M_{\bs\V}(z,w)d\nu(z)d\nu(w).
\qe
The continuity of the map
\[
\nu\mapsto \iint F^M_{\bs\V}(z,w)d\nu(z)d\nu(w)
\]
provides by  letting $\delta\rightarrow 0$ in \eqref{A6}
\eq
\label{A7}
\limsup_{\delta\rightarrow 0}\limsup_{N\rightarrow\infty}\frac{1}{N^2}\log \Big\{Z_N\p_N\Big(T_*\mu^N\in\B(\mu,\delta)\Big)\Big\}\leq -\iint F^M_{\bs\V}(z,w)d\mu(z)d\mu(w),
\qe
and (c) is finally deduced by monotone convergence letting $M\rightarrow\infty$ in \eqref{A7}.
\end{proof}

Equipped with Proposition \ref{TstarmuN}, we are now in position to prove Theorem \ref{th2} thanks to the compactification procedure described in Section \ref{compactification}.

\subsection{Proof of Theorem \ref{th2}}
\label{proofth2}
In this subsection, $\Delta=\R$ or $\C$, and $V:\Delta\rightarrow\R$ is a continuous map satisfying the growth assumption \eqref{weakgrowth}.

\begin{proof}[Proof of Theorem \ref{th2}] (a)
Since $I_{\bs\V}(\mu)=+\infty$ for all $\mu\in\M_1(\Delta_\s)$ such that $\mu(\{\infty\})>0$, we obtain from Lemma \ref{homeo} and \eqref{relation} that the levels sets of $I_V$ and $I_{\bs\V}$ are homeomorphic, namely for any $\alpha\in\R$
\[
T_*\Big\{\mu\in\M_1(\Delta):\,I_V(\mu)\leq \alpha\Big\} = \Big\{\mu\in\M_1(\Delta_\s):\,I_{\bs\V}(\mu)\leq \alpha\Big\}.
\]
Thus,  Theorem \ref{th2} (a) follows from Proposition \ref{TstarmuN} (a).

(b) Theorem \ref{th2} (a) yields the existence of minimizers for $I_V$ on $\M_1(\Delta)$. Since $T_*$ is a linear injection, it follows from \eqref{relation} and Proposition \ref{TstarmuN} (b) that $I_V$ is strictly convex on the set where it is finite, which warrants the uniqueness of the minimizer.

(c),(d) 
It is enough to show that for any closed set $\F\subset\M_1(\Delta)$,
\eq
\label{UB}
\limsup_{N\rightarrow\infty}\frac{1}{N^2}\log \Big\{Z_N \p_N\Big(\mu^N\in\F\Big)\Big\}\leq - \inf_{\mu\in\F}I_V(\mu),
\qe
and for any open set $\mathcal{O}\subset \M_1(\Delta)$,
\eq
\label{LB}
\liminf_{N\rightarrow\infty}\frac{1}{N^2}\log\Big\{ Z_N \p_N\Big(\mu^N\in\mathcal{O}\Big)\Big\}\geq - \inf_{\mu\in\mathcal{O}}I_V(\mu).
\qe
Indeed, by taking $\F=\mathcal{O}=\M_1(\Delta)$ in \eqref{UB} and \eqref{LB}, one  obtains
\[
\lim_{N\rightarrow\infty}\frac{1}{N^2}\log Z_N= -\inf_{\mu\in\M_1(\Delta)}I_V(\mu)=-I_V(\mu_V^*),
\]
the latter quantity being finite. 

Let us first show \eqref{UB}. We have for any closed set $\F\subset\M_1(\Delta)$ that
\eq
\label{C1}
 \p_N\Big(\mu^N\in\F\Big)  \leq  \p_N\Big(T_*\mu^N\in{\rm clo}(T_*\F)\Big),
\qe
where ${\rm clo}(T_*\F)$ stands for the closure of $T_*\F$ in $\M_1(\Delta_\s)$. Inspired from the proof of \cite[Theorem 4.1.11]{DZ}, we fix $\epsilon>0$, and introduce
\[
I^\epsilon_{\bs\V}(\mu)=\min\big(I_{\bs\V}(\mu)-\epsilon,1/\epsilon\big),\qquad \mu\in\M_1(\Delta_\s).
\] 
Then for any $\mu\in\M_1(\Delta_\s)$, Proposition \ref{TstarmuN} (c) provides  the existence of $\delta_\mu>0$ such that
\eq
\label{ineqepsilon}
\limsup_{N\rightarrow\infty}\frac{1}{N^2}\log\Big\{ Z_N\p_N\Big(T_*\mu^N\in\B(\mu,\delta_\mu)\Big)\Big\}\leq - I^\epsilon_{\bs\V}(\mu).
\qe
Since $\M_1(\Delta_\s)$ is compact, so is ${\rm clo}(T_*\F\big)$,  and thus there exists a finite number of measures $\mu_1,\ldots,\mu_d\in{\rm clo}(T_*\F\big)$ such that
\[
\p_N\Big(T_*\mu^N\in{\rm clo}(T_*\F)\Big)\leq \sum_{i=1}^d \p_N\Big(T_*\mu^N\in\B(\mu_i,\delta_{\mu_i})\Big).
\]
As a consequence, it follows with \eqref{ineqepsilon}
\begin{align}
\label{ineqepsilon2}
 & \limsup_{N\rightarrow\infty}\frac{1}{N^2}\log\Big\{ Z_N\p_N\Big(T_*\mu^N\in{\rm clo}(T_*\F)\Big)\Big\}\nonumber\\
\leq &\quad \max_{i=1}^d\,\limsup_{N\rightarrow\infty}\frac{1}{N^2}\log\Big\{ Z_N\p_N\Big(T_*\mu^N\in\B(\mu_i,\delta_{\mu_i})\Big)\Big\}\nonumber\\
\leq &\quad -\min_{i=1}^d  I^\epsilon_{\bs\V}(\mu_i) \quad \leq  \; -\inf_{\mu\in{\,\rm clo}(T_*\F)} I^\epsilon_{\bs\V}(\mu).
\end{align}
By letting $\epsilon\rightarrow0$ in \eqref{ineqepsilon2}, we obtain
\eq
\label{C2}
\limsup_{N\rightarrow\infty}\frac{1}{N^2}\log\Big\{ Z_N \p_N\Big(T_*\mu^N\in{\rm clo}(T_*\F)\Big)\Big\}
\leq -\inf_{\mu\in{\,\rm clo}(T_*\F)} I_{\bs\V}(\mu).
\qe
If $\nu\in{\rm clo}(T_*\F)$, then either $\nu\in T_*\F$ or $\nu(\{\infty\})>0$. Indeed, let $(T_*\eta_N)_N$ be a sequence in $T_*\F$ with limit $\nu$ satisfying $\nu(\{\infty\})=0$. Lemma \ref{homeo} yields $\eta\in\M_1(\Delta)$ such that $\nu=T_*\eta$ and moreover the convergence of  $(\eta_N)_N$  towards  $\eta$. Since $\F$ is closed, necessarily  $\nu\in T_*\F$. As a consequence, since $I_{\bs\V}(\mu)=+\infty$ as soon as $\mu(\{\infty\})>0$, we obtain from \eqref{relation}
\eq
\label{C3}
\inf_{\mu\in{\,\rm clo}(T_*\F)} I_{\bs\V}(\mu)   = \inf_{\mu\in T_*\F} I_{\bs\V}(\mu) = \inf_{\mu\in \F}I_V(\mu).
\qe
Finally,  \eqref{UB} follows from \eqref{C1}, and \eqref{C2}--\eqref{C3}.

We now prove \eqref{LB}. It is sufficient to show that for any $\mu\in\M_1(\Delta)$ and any neighborhood $\mathcal{G}\subset\M_1(\Delta)$ of $\mu$ we have 
\eq
\label{wLB}
\liminf_{N\rightarrow\infty}\frac{1}{N^2}\log \Big\{Z_N\p\Big(\mu^N\in\mathcal{G}\Big)\Big\}\geq - I_V(\mu).
\qe
For any $k$ large enough, define $\mu_k\in\M_1(\R)$ to be the normalized restriction of $\mu$ to the compact $\Delta\cap[-k,k]^2$. Then $(\mu_k)_k$ converges towards $\mu$ as $k\rightarrow\infty$ and one easily obtains from the monotone  convergence theorem
that \[
\lim_{k\rightarrow\infty}I_V(\mu_k)=I_V(\mu).
\]
As a consequence, it is enough to show \eqref{wLB} under the extra assumption that the $\mu$'s are compactly supported, so that the statement \eqref{LB} is independent of the growth assumption on $V$. Thus, one can reproduce the proof of \cite[Theorem 2.6.1]{AGZ} to show \eqref{wLB} when $\Delta=\R$, and similarly the one of \cite[Theorem 5.4.9]{HP} when $\Delta=\C$. The prove of Theorem \ref{th2} is therefore complete.
\end{proof}

\begin{remark} 
\label{alternative}
An alternative approach to the proof of Theorem \ref{th2} is as follows. Assume that one can establish a   large deviation lower bound similar to \eqref{wLB} for $T_*\mu^N$, so that it would provide together with Proposition \ref{TstarmuN} a full large deviation principle for $T_*\mu^N$ on $\M_1(\Delta_\s)$. Then one would obtain a large deviation principle for $T_*\mu^N$ on $\{\mu\in\M_1(\Delta_\s):\;\mu(\{\infty\})=0\}$, equipped with the induced topology of $\M_1(\Delta_\s)$, by "inclusion principle" \cite[Lemma 4.1.5(b)]{DZ}, and then the required large deviation principle for $\mu^N$ on $\M_1(\Delta)$ by contraction principle  along $T_*^{-1}$ \cite[Theorem 4.2.1]{DZ}, thanks to Lemma \ref{homeo}. 
\end{remark}

\section{Generalizations} 

In this section we consider some generalizations of the result and the method presented in the previous sections.
 
\label{generalizations}

\subsection{Concerning the support of the Coulomb gas}
\label{genesupport}
A natural question is to ask if Theorem \ref{th2} still holds for more general supports $\Delta$ and less regular potentials $V$, as suggested in the previous sections.

Let us emphasis that the compactification procedure presented in Section \ref{compactification}  and Proposition \ref{TstarmuN}  hold under the only assumptions that  $\Delta$ is a closed subset of $\C$ and $V:\Delta\rightarrow\R\cup\{+\infty\} $ is a lower semi-continuous  map which satisfies the growth assumption \eqref{weakgrowth}, and such that  there exists $\mu\in\M_1(\Delta)$ with $I_V(\mu)<+\infty$. As a consequence,   the proofs of Theorem \ref{th2}(a), (b) and the upper bound \eqref{UB} provided in Section \ref{proofth2} also hold under such a weakening of assumptions on $V$ and $\Delta$. A full large deviation principle would hold as soon as one can establish in this setting the lower bound \eqref{LB} for $\mu^N$, or its equivalent for $T_*\mu^N$, see Remark \ref{alternative}.

\subsection{Concerning the compactification procedure}

The main use of the compactification procedure was to avoid the use of exponential tightness to prove the large  deviation upper bound. It turns out that the proof of \eqref{UB} can be adapted without any substantial change to obtain a similar result in a more general setting that we  present now. 
 
 Let $\X$ be a locally compact, but not compact, Polish space and consider a sequence $(\mu^N)_N$ of random variables taking values in the space $\M_1(\X)$ of Borel probability measures on $\X$.   Let $(\widehat{\X},T)$ be a one-point compactification of $\X$, that is  a compact set $\widehat \X$ with an element $\infty\in\widehat \X$ such that $T : \X \rightarrow \widehat \X$ is an homeomorphism on its image $T(X)$ and $\widehat \X\setminus T(\X)=\{\infty\}$.  Define $T_*$ to be the push-forward by $T$ similarly as in \eqref{pushf}. We equip $\M_1(\widehat \X)$ with its weak topology, so that it becomes a  compact Polish space, and denotes $\B(\mu,\delta)$ the ball centered in $\mu\in\M_1(\widehat \X)$ with radius $\delta>0$.
 
\begin{proposition}
Let $(\alpha_N)_N$ and $(Z_N)_N$ be two sequences of real positive numbers with $\lim_{N\rightarrow\infty}\alpha_N=+\infty$. Assume there exists a lower semi-continuous map   $\Phi :\M_1(\widehat \X)\rightarrow\R\cup\{+\infty\}$ which satisfies the following.
 \begin{itemize}
 \item[{\rm (a)}] For all $\mu\in\M_1(\widehat \X)$, $\Phi(\mu)=+\infty$ as soon as $\mu(\{\infty\})>0$.
  \item[{\rm (b)}] For all $\mu\in\M_1(\widehat \X)$,
\[
\limsup_{\delta\rightarrow0}\limsup_{N\rightarrow\infty}\frac{1}{\alpha_N}\log\Big\{ Z_N\p_N\Big(T_*\mu^N\in\B(\mu,\delta)\Big)\Big\}\leq - \Phi(\mu).
\]
 \end{itemize}
Then for any closed set $\F\subset \M_1(\X)$,
\[
\limsup_{N\rightarrow\infty}\frac{1}{\alpha_N}\log \Big\{Z_N \p_N\Big(\mu^N\in\F\Big)\Big\}\leq - \inf_{\mu\in\F}\Phi\circ T_*(\mu).
\]
\end{proposition}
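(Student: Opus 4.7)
The plan is to adapt the proof of the upper bound \eqref{UB} from Theorem \ref{th2}(c), which already carries through with almost no change, since that argument only used (i) compactness of $\M_1(\widehat \X)$, (ii) a ball-wise weak LDP upper bound like assumption (b), and (iii) the vanishing of $\Phi$-mass at $\infty$ together with closedness of $\F$.

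First I would pass from $\M_1(\X)$ to $\M_1(\widehat \X)$ via $T_*$. Since $T:\X\to\widehat \X\setminus\{\infty\}$ is a homeomorphism, the monotonicity
\[
\p_N\big(\mu^N\in\F\big)\leq \p_N\big(T_*\mu^N\in{\rm clo}(T_*\F)\big)
\]
holds, where the closure is taken in the compact space $\M_1(\widehat \X)$. Next, for a fixed $\epsilon>0$, set
\[
\Phi^\epsilon(\mu)=\min\bigl(\Phi(\mu)-\epsilon,\,1/\epsilon\bigr),\qquad \mu\in\M_1(\widehat \X).
\]
By assumption (b), for each $\mu\in\M_1(\widehat \X)$ there is $\delta_\mu>0$ with
\[
\limsup_{N\to\infty}\frac{1}{\alpha_N}\log\Bigl\{Z_N\p_N\bigl(T_*\mu^N\in\B(\mu,\delta_\mu)\bigr)\Bigr\}\leq -\Phi^\epsilon(\mu).
\]
Compactness of ${\rm clo}(T_*\F)$ lets me extract a finite subcover $\B(\mu_1,\delta_{\mu_1}),\dots,\B(\mu_d,\delta_{\mu_d})$ of ${\rm clo}(T_*\F)$. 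A union bound and the classical ``max of finitely many $\limsup$'s'' trick then give
\[
\limsup_{N\to\infty}\frac{1}{\alpha_N}\log\Bigl\{Z_N\p_N\bigl(T_*\mu^N\in{\rm clo}(T_*\F)\bigr)\Bigr\}\leq -\inf_{\mu\in{\rm clo}(T_*\F)}\Phi^\epsilon(\mu),
\]
and letting $\epsilon\to0$ replaces $\Phi^\epsilon$ by $\Phi$.

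The one place that needs genuine work, and where I expect the main obstacle to sit, is the identification
\[
\inf_{\mu\in{\rm clo}(T_*\F)}\Phi(\mu)=\inf_{\mu\in\F}\Phi\circ T_*(\mu).
\]
The inequality ``$\leq$'' is immediate since $T_*\F\subset{\rm clo}(T_*\F)$. For the reverse, pick $\nu\in{\rm clo}(T_*\F)$ with $\Phi(\nu)<+\infty$; by (a) we have $\nu(\{\infty\})=0$, so I must show $\nu\in T_*\F$. Taking a sequence $(T_*\eta_n)\subset T_*\F$ converging weakly to $\nu$, the argument of Lemma \ref{homeo} applies verbatim in the locally compact Polish setting: outer regularity of $\nu$ together with $\nu(\{\infty\})=0$ gives tightness of $(\eta_n)$ in $\M_1(\X)$, and weak testing against $f\circ T^{-1}$ for $f\in C_c(\X)$ identifies any limit of $(\eta_n)$ with $T_*^{-1}\nu$. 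Since $\F$ is closed, $T_*^{-1}\nu\in\F$, so $\nu\in T_*\F$ as required.

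Combining these two steps yields
\[
\limsup_{N\to\infty}\frac{1}{\alpha_N}\log\Bigl\{Z_N\p_N(\mu^N\in\F)\Bigr\}\leq -\inf_{\mu\in\F}\Phi\circ T_*(\mu),
\]
which is the claim. The only genuinely new ingredient compared to Theorem \ref{th2}(c) is checking that the Lemma \ref{homeo} argument survives without the Riemann-sphere-specific geometry, which it does since it only uses that $\X$ is locally compact Polish, that $\widehat\X$ is its one-point compactification, and that probability measures on Polish spaces are Radon.
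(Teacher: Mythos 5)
Your proposal is correct and follows essentially the same route the paper intends: the paper proves this proposition by adapting its proof of the upper bound \eqref{UB} verbatim (the $\epsilon$-truncation of the rate, the finite cover of the compact set ${\rm clo}(T_*\F)$ by balls from the ball-wise bound, the union bound, and the identification $\inf_{{\rm clo}(T_*\F)}\Phi=\inf_{\F}\Phi\circ T_*$ via the argument of Lemma \ref{homeo} and assumption (a)). Your observation that the Lemma \ref{homeo} step only uses that $\X$ is locally compact Polish with $\widehat\X$ its metrizable one-point compactification, and not the Riemann-sphere geometry, is exactly the point the paper makes when asserting the proof carries over ``without any substantial change.''
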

Moreover, note that $\Phi$ has compact level sets (resp. is strictly convex on the set where it is finite) if and only if $\Phi\circ T_*$ has (resp.  is). 

We mention that a similar strategy is used in \cite{HK2} where a LDP is established  for a two type particles Coulomb gas related to an additive perturbation of a Wishart random matrix model.
\\

\subsection*{Acknolwledgments}
The author is grateful to the anonymous referees for their useful suggestions and remarks, e.g. to point out Remark \ref{alternative}. He is supported by FWO-Flanders projects G.0427.09 and by the Belgian Interuniversity
Attraction Pole P06/02.

\end{document}